\DeclareMathAlphabet{\mathpzc}{OT1}{pzc}{m}{it}
\newcommand{\Ball}{\mathscr{B}}
\newcommand{\GP}{\operatorname{GP}}
\definecolor{fresh}{HTML}{1e5e06}
\definecolor{checked}{HTML}{02087F}
\definecolor{final}{HTML}{A66911}
\definecolor{wrong}{HTML}{c90000}
\definecolor{skip}{HTML}{ffffff}
\definecolor{normal}{HTML}{000000}
\definecolor{fresh}{HTML}{000000}\definecolor{checked}{HTML}{000000}
\definecolor{external}{HTML}{000000}\definecolor{fresh}{HTML}{000000}
\definecolor{minor-rev}{HTML}{000000}\definecolor{major-rev}{HTML}{000000}\definecolor{skip}{HTML}{000000}\definecolor{normal}{HTML}{000000}
\begin{document}

\author[J.\ Konieczny]{Jakub Konieczny}
\address{Department of Computer Science, University of Oxford,
Wolfson Building, Parks Road, Oxford OX1 3QD, UK}
\email{jakub.konieczny@gmail.com}

\title[Multiplicative generalised polynomials]{Multiplicative generalised polynomial sequences}
\date{\today}
 
\begin{abstract}
	We fully classify completely multiplicative sequences which are given by generalised polynomial formulae, and obtain a similar result for (not necessarily completely) multiplicative sequences under the additional restriction that the sequence is not zero almost everywhere. 
\end{abstract}

\keywords{}
\subjclass[2020]{Primary: 11J54. Secondary: 11N64, 37A44.}
 
\maketitle

\section{Introduction}\color{checked}
\label{sec:intro}

\subsection{Background}
Generalised polynomials are expressions built up from polynomials with the use of the integer part function, addition and multiplication. Unlike classical polynomials, their generalised counterparts can be bounded or even finitely-valued without being constant. A notable example of this behaviour is provided by Sturmian sequences $\brabig{ \ip{\a (n+1)+\b} - \ip{\a n + \b}}_n$ ($\a \in (0,1)\setminus \QQ$, $\b \in [0,1)$).  Properties of bounded generalised polynomials have been extensively studied, see e.g.\ \cite{Haland-1993}, \cite{Haland-1994}, \cite{BergelsonLeibman-2007}, \cite{Leibman-2012}, \cite{AdamczewskiKonieczny-2023-TAMS} and references therein.

Given a sequence $f \colon \NN \to \RR$ it is often a non-trivial task to determine if $f$ is a generalised polynomial\footnote{Here and elsewhere, by a slight abuse of notation, we identify a generalised polynomial with the corresponding sequence.}. For instance, letting $f = 1_E$ be the indicator function of a set $E \subset \NN$ (given by $1_E(n) = 1$ if $n \in E$ and $1_E(n) = 0$ otherwise) we know that $f$ is generalised polynomial if $E = \{1,2,3,5,8,13\dots\}$ is the set of Fibonacci numbers, or if $E = \{1, 2, 4, 7, 13, 24,\dots\}$ is the set of Tribonacci numbers \cite{ByszewskiKonieczny-2018-TAMS}, or if $E$ is a subset one of the two aforementioned sets \cite{ByszewskiKonieczny-2023+}, but not if $E = \{1,2,4,8,16,32,\dots\}$ is the set of powers of $2$ \cite{Konieczny-2022-JLMS}.

With the above discussion in mind, given a family $\cF$ of sequences $f \colon \NN \to \RR$, it is natural to ask which elements of $\cF$ are generalised polynomials. More generally, we can ask the same question for complex-valued sequences, where we say that a sequence $f \colon \NN \to \CC$ is a generalised polynomial if it takes the form $f = f_0 + i f_1$ where $f_0,f_1 \colon \NN \to \RR$ are generalised polynomials.

\subsection{New results}
The purpose of this paper is to resolve the aforementioned question partially for multiplicative sequences, and fully for completely multiplicative sequences. Recall that a sequence $f \colon \NN \to \CC$ is multiplicative if $f(nm) = f(n)f(m)$ for all $n,m \in \NN$ with $\gcd(n,m) = 1$, and completely multiplicative if the requirement $\gcd(n,m) = 1$ can be removed. We will say that a statement $\phi(n)$ holds for asymptotically almost all $n \in \NN$ if the set of $n$ for which it is false has asymptotic density zero, i.e., $\#\set{n \leq N}{\neg \phi(n)}/N \to 0$ as $N \to \infty$.

\begin{alphatheorem}\label{thm:intro:main}
	Let $f \colon \NN \to \CC$ be a multiplicative generalised polynomial sequence. Then either there exists a periodic multiplicative sequence $\chi \colon \NN \to \CC$ and an exponent $a \in \NN_0$ such that $f(n) = \chi(n)n^a$ for all $n \in \NN$, or $f(n) = 0$ for asymptotically almost all $n \in \NN$.
\end{alphatheorem}

\begin{remark}\label{rmk:intro:main}
	Periodic multiplicative sequences were classified in \cite{LeitmannWolke-1976}. While the classification is not particularly difficult, it is rather technical to formulate, so we do not reproduce it here. Combined with this result, Theorem \ref{thm:intro:main} gives a complete description of multiplicative generalised polynomials which are not almost everywhere zero.
\end{remark}

Existence of non-trivial examples of multiplicative (but not completely multiplicative) generalised polynomial sequences $f \colon \NN \to \CC$ such that $f(n) = 0$ for asymptotically almost all $n \in \NN$ is a straightforward corollary of the fact that each sufficiently sparse sequence is a generalised polynomial, stated more formally below.

\begin{theorem}[{\cite[Thm.\ C]{ByszewskiKonieczny-2018-TAMS}}]\label{thm:sparse}
	Let $c > 0$ and let $E = \set{n_{i}}{i \in \NN} \subset \NN$ be a set satisfying $n_{i+1} > n_i^{1+c}$ for all $i \in \NN$. Then $1_E$ is a generalised polynomial.
\end{theorem}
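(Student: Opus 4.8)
Although this result is quoted from \cite{ByszewskiKonieczny-2018-TAMS}, here is the strategy I would follow. I will use two elementary facts. First, a set $A\subseteq\NN$ is a \emph{generalised polynomial set} — meaning $1_A$ is a generalised polynomial — precisely when its complement and all finite unions and intersections with other generalised polynomial sets are again such; this is immediate from $1_{A\cup B}=1_A+1_B-1_A1_B$ and $1_{\NN\setminus A}=1-1_A$ together with closure of generalised polynomials under $+$ and $\times$. Second, if $h$ is any generalised polynomial and $\delta\in(0,1)$ is a \emph{constant}, then $\set{n}{\{h(n)\}\in[0,\delta)}$ is a generalised polynomial set, since its indicator equals $\lfloor h(n)\rfloor-\lfloor h(n)-\delta\rfloor$, both summands being integer parts of generalised polynomials. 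The insistence that $\delta$ be a constant — not, say, a negative power of $n$ — is the crucial restriction, and it dictates the shape of everything below.

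\emph{Reduction to a very sparse sequence.} Fix $K=K(c)\in\NN$ with $(1+c)^K\geq 100$ and partition $E$ according to the residue of the index: put $E_r:=\set{n_i}{i\equiv r\pmod K}$ for $1\leq r\leq K$. Then $E=\bigsqcup_{r=1}^K E_r$, so $1_E=\sum_{r=1}^K 1_{E_r}$, and it suffices to treat each $E_r$ separately. If $m_1<m_2<\cdots$ enumerates $E_r$, then $m_{s+1}>m_s^{(1+c)^K}\geq m_s^{100}$. Hence we may assume henceforth that $E=\{n_1<n_2<\cdots\}$ is so sparse that $n_{i+1}>n_i^{100}$ for all $i$; finitely many small terms may be discarded at the end, or, more conveniently, accommodated as finitely many extra constraints in the construction that follows.

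\emph{Encoding $E$ into parameters, and the main obstacle.} The plan is to produce a generalised polynomial $h$ and a constant $\delta\in(0,1)$ with $\set{n}{\{h(n)\}\in[0,\delta)}=E$; by the two facts above this finishes the proof. The function $h$ will be assembled by repeatedly nesting the integer part around polynomial expressions in $n$ and finitely many real parameters $\alpha_1,\dots,\alpha_r$, and the entire sequence $(n_i)$ will be encoded into the $\alpha_j$ by a nested-interval (Liouville-type) construction. The heuristic is that, because $(n_i)$ grows super-exponentially, there is enormous room to choose the $\alpha_j$ so that at $n=n_i$ every ``lower-order'' contribution to $h(n)$ is forced extremely close to an integer — so that $\{h(n_i)\}$ lands inside a prescribed minuscule sub-window of $[0,\delta)$ — while the mutual ``genericity'' of the parameters pushes $\{h(n)\}$ out of $[0,\delta)$ for all other $n$. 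I expect the genuine difficulty to be exactly this last clause: excluding \emph{every} spurious $n\notin E$ with $\{h(n)\}\in[0,\delta)$, not merely generic ones. This forces one to select the parameters against a countably infinite list of forbidden conditions, and to exploit the gap bound $n_{i+1}>n_i^{100}$ to guarantee that the successive nested intervals stay long enough to avoid all of them; verifying that this bookkeeping closes up, and that the resulting $h$ realises $E$ exactly, is where the real work lies.
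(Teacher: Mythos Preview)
The paper does not contain a proof of this statement: Theorem~\ref{thm:sparse} is merely quoted from \cite{ByszewskiKonieczny-2018-TAMS} and used as a black box, so there is no ``paper's own proof'' against which to compare your proposal.

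As to the proposal itself: your preliminary observations (closure of generalised polynomial sets under Boolean operations, the formula $\lfloor h(n)\rfloor-\lfloor h(n)-\delta\rfloor$ for the indicator of $\{h(n)\}\in[0,\delta)$, and the index-splitting reduction to $n_{i+1}>n_i^{100}$) are all correct and are indeed standard opening moves for this kind of result. However, what you have written is an outline of a strategy rather than a proof. The step you yourself flag as the ``real work'' --- choosing finitely many real parameters by a nested-interval scheme so that \emph{exactly} the set $E$ is captured, with no spurious $n\notin E$ slipping into $[0,\delta)$ --- is the entire content of the theorem, and you have not supplied any concrete mechanism for carrying it out. In particular, you have not specified the shape of $h$, nor explained why a \emph{fixed finite} number of parameters suffices to encode an arbitrary infinite sequence, nor why the exclusion conditions for the infinitely many bad $n$ can be met simultaneously. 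The strategy is broadly in the spirit of the construction in \cite{ByszewskiKonieczny-2018-TAMS}, but as it stands it does not constitute a proof.
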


\begin{example}\label{rmk:intro:main-2}
	Let $c > 0$ and let $A = \set{\a_i}{i \in \NN} \subset \NN_0$ be a set satisfying $\a_{i+1} > (1+c) \a_i$ for all $i \in \NN_0$. Let $f \colon \NN \to \{0,1\}$ be a sequence given by
\[
	f(n) = 
	\begin{cases}
		1 &\text{if } n = 2^\a \text{ for some } \a \in A,\\
		0 &\text{otherwise}.
	\end{cases}
\]
	Then $f$ is multiplicative by direct inspection, and $f$ is a generalised polynomial by Theorem \ref{thm:sparse}.
\end{example}

In the case of completely multiplicative sequences, we obtain a full classification by combining Theorem \ref{thm:intro:main} with the fact that there are no non-trivial generalised polynomials that are simultaneously invariant under a dilation and almost everywhere zero.

\begin{theorem}[{Corollary of \cite[Thm.\ A]{Konieczny-2022-JLMS}}]\label{thm:times-k}
Let $k \geq 2$ be an integer and let $f \colon \NN \to \{0,1\}$ be a generalised polynomial such that $f(kn) = f(n)$ for all $n \in \NN$ and $f(n) = 0$ for asymptotically almost all $n \in \NN$. Then $f(n) = 0$ for all $n \in \NN$.
\end{theorem}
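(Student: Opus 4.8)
The result is deduced from \cite[Thm.~A]{Konieczny-2022-JLMS}, which asserts, in essence, that a bounded generalised polynomial that is invariant under a fixed dilation and vanishes on a set of density one must vanish identically. I first explain the short reduction to a statement of that form, and then sketch why the underlying theorem holds, indicating where the genuine difficulty lies.

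Set $E = \set{n \in \NN}{f(n) = 1}$, so that $E$ has asymptotic density zero, while $f(kn) = f(n)$ makes $E$ invariant under $n \mapsto kn$ (and under $n \mapsto n/k$ when $k \mid n$). It is enough to reach a contradiction from $E \neq \emptyset$: if $n_0 \in E$ then $k^j n_0 \in E$ for every $j \geq 0$, so $E$ is unbounded and ``eventually zero'' already forces ``identically zero''; and writing a hypothetical element of $E$ as $k^a m$ with $k \nmid m$ we may assume $m \in E$, so that the entire forward orbit $O = \set{k^j m}{j \geq 0}$ of $m$ under multiplication by $k$ is contained in $E$. Thus everything comes down to showing that a dilation-invariant generalised polynomial support of density zero cannot contain such an orbit.

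Here one invokes the Bergelson--Leibman structure theorem: $1_E$ may be written as $n \mapsto 1_A(g(n)\Gamma)$ for a connected nilmanifold $X = G/\Gamma$, a polynomial sequence $g \colon \ZZ \to G$, and a set $A \subseteq X$ that is semialgebraic in Mal'cev coordinates and has Haar-null boundary, so that $1_A$ is Riemann integrable. Since the orbit $(g(n)\Gamma)_n$ equidistributes in a subnilmanifold $Y \subseteq X$ relative to its Haar measure $\mu_Y$, the density-zero hypothesis gives $\int_Y 1_A \, d\mu_Y = 0$, and Riemann integrability upgrades this to $\mu_Y(\overline{A \cap Y}) = 0$; hence the relevant orbit points are confined to a proper measure-zero semialgebraic subset of $Y$, in particular to a finite union of proper subnilmanifolds. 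The dilation invariance says that $g(n)\Gamma$ lies in this thin set precisely when $g(kn)\Gamma$ does, and, since multiplication by $k$ acts on the orbit closure as a controlled, endomorphism-like map, one runs a descent along the orbit $O$ to conclude that no nonempty such orbit can remain trapped there --- a contradiction.

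The main obstacle --- and the reason this is a corollary of a substantial theorem rather than a routine deduction --- is precisely this last step: the subsequence $j \mapsto g(k^j m)\Gamma$ is indexed by the \emph{exponentially} growing parameter $k^j$, so the usual equidistribution theory for polynomial orbits on nilmanifolds does not apply directly, and one must instead exploit the self-similarity of the orbit closure under the $\times k$ action together with the rigidity of semialgebraic sets. All of this is carried out in \cite[Thm.~A]{Konieczny-2022-JLMS}; for the present statement it then remains only to observe that its hypotheses are met --- $f$ is a generalised polynomial taking values in $\{0,1\}$, invariant under $n \mapsto kn$, and zero on a set of full density --- and to read off that $f \equiv 0$.
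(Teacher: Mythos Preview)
The paper does not actually supply a proof of this statement: it is quoted as a black-box corollary of \cite[Thm.~A]{Konieczny-2022-JLMS}, with no argument given. Your proposal ultimately does the same thing --- you verify that the hypotheses of that external theorem are met and invoke it --- so in that sense the approaches coincide.

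The extra exposition you add (the Bergelson--Leibman representation, the reduction to a forward $\times k$-orbit, the identification of the genuine obstacle as the exponential growth of $k^j$) is accurate in spirit and helpful, but one line is imprecise: a Haar-null semialgebraic subset of a nilmanifold need not be, and generally is not, ``a finite union of proper subnilmanifolds'' --- it lies in a lower-dimensional algebraic set in Mal'cev coordinates, which is a rather different object. Since you immediately defer the actual argument to \cite{Konieczny-2022-JLMS} this does not affect correctness, but the phrase should be softened or dropped.
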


We point out in passing that there are two multiplicative sequences $f \colon \NN \to \CC$ satisfying $f(n) = 0$ for $n \geq 2$, specified by $f(1) = 0$ or $f(1) = 1$.

\begin{alphatheorem}\label{thm:intro:main-compl}
	Let $f \colon \NN \to \CC$ be a completely multiplicative generalised polynomial sequence. Then either there exists a Dirichlet character $\chi \colon \NN \to \CC$ and an exponent $a \in \NN_0$ such that $f(n) = \chi(n)n^a$ for all $n \in \NN$, or $f(n) = 0$ for all $n \in \NN$ with $n \geq 2$.
\end{alphatheorem}
\begin{proof}
	We know from Theorem \ref{thm:intro:main} that $f(n) = \chi(n) n^a$ for all $n \in \NN$, where $\chi$ is periodic and multiplicative, or $f(n) = 0$ for asymptotically almost all $n \in \NN$. In the former case, since $\chi(n) = f(n)/n^a$ for all $n \in \NN$, we see that $\chi$ is completely multiplicative, and hence it must be a Dirichlet character or identically zero.
	
	In the latter case, consider the sequence $g \colon \NN \to \{0,1\}$ given by $g(n) = 1$ if $f(n) \neq 0$ and $g(n) = 0$ if $f(n) = 0$. Then $g$ is completely multiplicative and it is not hard to show that it is a generalised polynomial (see e.g.\ \cite[Prop 5.4]{AdamczewskiKonieczny-2023-TAMS}). If there is at least one integer $k \geq 2$ with $f(k) \neq 0$ then for all $n \in \NN$ we have $g(kn) = g(n)$, which combined with Theorem \ref{thm:sparse} implies that $g(n) = 0$ and consequently also $f(n) = 0$ for all $n \in \NN$, leading to a contradiction. Thus, $f(n) = 0$ for all $n \geq 2$. (In fact, this step is also a special case of \cite[Prop. 12.16]{AdamczewskiKonieczny-2023-TAMS}, which is proved using a very similar argument.)
\end{proof}

\color{checked}
\subsection{Open problems}

As alluded to earlier, we note that, as partial converse to Theorem \ref{thm:intro:main}, for each periodic multiplicative sequence $\chi \colon \NN \to \CC$ and each exponent $a \in \NN_0$, the sequence $f$ given by $f(n) = \chi(n)n^a$ is both multiplicative and generalised polynomial. A question that is left open is the classification of multiplicative generalised polynomial sequences that are zero almost everywhere. In order to illustrate the difficulty of dealing with this problem, consider a rapidly increasing sequence of primes $(p_i)_{i=1}^\infty$ and let $f \colon \NN \to \{0,1\}$ be given by
\begin{equation}\label{eq:intro:def-FP}
	f(n) =
	\begin{cases}
		1 &\text{if } n = \prod_{i \in I} p_i \text{ for some finite } I \subset \NN;\\
		0 &\text{otherwise}.
	\end{cases}
\end{equation}
Evidently, thus defined sequence $f$ is multiplicative (but not completely multiplicative) and almost everywhere zero (assuming, for instance, that $\set{p_i}{i \geq 1}$ has zero relative density inside the set of all primes). However, it is not clear whether we should expect $f$ to be a generalised polynomial. For comparison, we point out that for each sequence of positive integers $(n_i)_{i=1}^\infty$ that increases rapidly enough, the corresponding characteristic sequence given by
\begin{equation}
	n \mapsto
	\begin{cases}
		1 &\text{if } n = n_i \text{ for some } i \in \NN;\\
		0 &\text{otherwise},
	\end{cases}
\end{equation}
is a generalised polynomial. (To be more precise, we require that there exists a constant $c > 0$ such that $\log n_{i+1} \geq (1+c) \log n_i$ for all sufficiently large $i$, meaning that $n_i$ has at least doubly exponential growth \cite{ByszewskiKonieczny-2018-TAMS}.) On the other hand, as a particular case of Theorem \ref{thm:intro:main-compl},
we see that the completely multiplicative sequence defined in analogy with \eqref{eq:intro:def-FP} by
\begin{equation}\label{eq:intro:def-FP-prime}
	n \mapsto
	\begin{cases}
		1 &\text{if } n = \prod_{i \in I} p_i^{\alpha_i} \text{ for some finite } I \subset \NN \text{ and } \a \in \NN^I;\\
		0 &\text{otherwise},
	\end{cases}
\end{equation}
is not generalised polynomial.

\subsection{Notation}

We let $\NN = \{1,2,\dots\}$ denote the set of positive integers and put $\NN_0 = \NN \cup \{0\}$. For $N \in \NN$ we let $[N] = \{0,1,2,\dots,N-1\}$ denote the length-$N$ initial interval of $\NN_0$. For $x \in \RR$, we let $\ip{x} \in \ZZ$ denote the integer part of $x$, sometimes also called the floor of $x$, and we let $\fp{x} = x - \ip{x} \in [0,1)$ denote the fractional part. We also let $\fpa{x} = \min( \fp{x}, 1-\fp{x})$ denote the circle norm of $x$.

We  use standard asymptotic notation, where $X = O(Y)$ or $X \ll Y$ if $X \leq C Y$ for an absolute constant $C > 0$. If the constant $C$ is allowed to depend on a parameter $p$, we write $X = O_p(Y)$ or $X \ll_p Y$. We also write $X = o_{p \to \infty}(Y)$ if $X/Y \to 0$ as $p \to \infty$.

\subsection*{Acknowledgements}
The author is grateful to Jakub Byszewski for extensive and fruitful discussions and for collaboration on related projects. The author works at the University of Oxford and is supported by UKRI Fellowship EP/X033813/1. For the purpose of open access, the author has applied a Creative Commons Attribution (CC BY) licence to any Author Accepted Manuscript version arising.

\color{black}

\section{Background}\color{checked}

\subsection{Generalised polynomials}
Generalised polynomial maps from $\ZZ$ to $\RR$, already informally defined in the introduction, are the smallest family $\GP$ of (two-sided) sequences that contains the usual polynomial sequences, constitutes a ring (under pointwise operations) and is closed under the operation of taking the integer part, meaning that if $g \in \GP$ then $\ip{g} \in \GP$, where $\ip{g}(n) := \ip{g(n)}$ for $n \in \ZZ$. By a one-sided generalised polynomial sequence we simply mean the restriction of a generalised polynomial on $\ZZ$ to $\NN$. By a generalised polynomial with values in $\RR^k$ ($k \in \NN$) we simply mean a $k$-tuple of $\RR$-valued generalised polynomials.

In subsequent discussion we will largely focus on bounded generalised polynomials. It is a standard observation that each generalised polynomial $g \colon \ZZ \to \RR$ admits an expansion of the form
\begin{align}\label{eq:bgrnd:exp-gp}
	g(n) &= g_d(n)n^d + \dots + g_1(n) n + g_0(n), & n \in \ZZ,
\end{align}
where $d \in \NN$ and $g_i \colon \ZZ \to \RR$ are bounded generalised polynomials. (This can be seen, for instance, by noting that the family of sequences of the form \eqref{eq:bgrnd:exp-gp} includes classical polynomials and is closed under addition, multiplication and taking the fractional part.) 

\subsection{Nilmanifolds}\label{sec:nilmanifolds}

We briefly discuss the prerequisites on nilmanifolds and their connection with generalised polynomials. For more details, we refer \cite{Tao-book} to introductory sections of papers such as \cite{MSTT,GreenTao-2012-Mobius,GreenTao-2012}; similar discussion can also be found in tangentially related \cite{KoniecznyMullner-2023+}.

Let $G$ be a $D$-dimensional $s$-step nilpotent Lie group and assume that $G$ is connected and simply connected. Let $\Gamma < G$ be a subgroup that is discrete and cocompact, meaning that the quotient space $G/\Gamma$ is compact. The space $G/\Gamma$ is called an \emph{$s$-step nilmanifold} and comes equipped with the Haar measure, which is the unique Borel probability measure on $G/\Gamma$ invariant under the action of $G$; we will use the notation $\int_{G/\Gamma} F(x) dx$ to denote the integral of a function $F \colon G/\Gamma \to \CC$ with respect to the Haar measure.

A \emph{degree-$d$ filtration} on $G$ is a sequence $G_\bullet$ of subgroups 
\[
	G = G_0 = G_1 \geq G_2 \geq G_3 \geq \dots
\] such that $G_{d+1} = \{e_G\}$ (and hence $G_{i} = \{e_G\}$ for all $i > d$) and for each $i,j$ we have $[G_i,G_j] \subset G_{i+j}$, where $[G_i,G_j]$ is the group generated by the commutators $[g,h] = ghg^{-1}h^{-1}$ with $g \in G_i$, $h \in G_j$. 

A \emph{Mal'cev basis} compatible with $\Gamma$ and $G_\bullet$ is a basis $\cX = (X_1,X_2,\dots,X_D)$ of the Lie algebra $\mathfrak{g}$ of $G$ such that
\begin{enumerate}
\item for each $0 \leq j \leq D$, the subspace $\fh_j := \operatorname{span}\bra{X_{j+1},X_{j+2},\dots,X_D}$ is a Lie algebra ideal in $\fg$;
 \item for each $0 \leq i \leq d$, each $g \in G_i$ has a unique representation as $g = \exp(t_{D(i) + 1} X_{t_{D(i) + 1}})  \cdots \exp(t_{D-1} X_{D-1}) \exp(t_D X_D) $, where $D(i) := \operatorname{codim} G_i$ and $t_j \in \RR$ for $D(i) < j \leq D$;
 \item $\Gamma$ is the set of all products $\exp(t_1 X_1) \exp(t_2 X_2) \cdots \exp(t_D X_D)$ with $t_j \in \ZZ$ for $1 \leq j \leq D$.
\end{enumerate}
We will usually keep the choice of the Mal'cev basis implicit, and assume that each filtered nilmanifold under consideration comes equipped with a fixed choice of Mal'cev basis. The Mal'cev basis $\cX$ induces bijective coordinate maps $\tau \colon G/\Gamma \to [0,1)^D$ and $\tilde \tau \colon G \to \RR^D$, such that
\begin{align*}
	x &= \exp(\tau_1(x)X_1) \exp(\tau_2(x)X_2) \cdots \exp(\tau_D(x)X_D) \Gamma, & x &\in G/\Gamma, \\
	g &= \exp(\tilde\tau_1(g)X_1) \exp(\tilde\tau_2(g)X_2) \cdots \exp(\tilde\tau_D(g)X_D), & g &\in G.
\end{align*}
The Mal'cev basis also induces a natural choice of a right-invariant metric on $G$ and a metric on $G/\Gamma$. We refer to \cite[Def.\ 2.2]{GreenTao-2012} for a precise definition. Keeping the dependence on $\cX$ implicit, we will use the symbol $d$ to denote either of those metrics.

A map $g \colon \ZZ \to G$ is polynomial with respect to the filtration $G_\bullet$, denoted $g \in \mathrm{poly}(\ZZ,G_\bullet)$, if it takes the form
\[
	g(n) = g_0^{} g_1^{n} \dots g_d^{\binom{n}{d}}, 
\]
where $g_i \in G_i$ for all $0 \leq i \leq d$ (cf.\ \cite[Lem.\ 6.7]{GreenTao-2012}). The restriction $g(an+b)$ of a polynomial $g$ to an arithmetic progression $a \ZZ + b$ is again a polynomial with respect to the same filtration.

\subsection{Representation of generalised polynomials}
A map $F \colon G/\Gamma \to \RR$ is \emph{piecewise polynomial} if, in Mal'cev coordinates, $G/\Gamma$ can be partitioned into a finite number of semialgebraic pieces such that the restriction of $F$ to each of them is polynomial, see \cite{BergelsonLeibman-2007} for details. For our purposes, we will only need to know that piecewise polynomial maps are continuous almost everywhere with respect to the Haar measure on $G/\Gamma$ and appear in the following representation theorem.

\begin{theorem}[{Corollary of \cite[Thm.\ A]{BergelsonLeibman-2007}}]\label{thm:BL}
	Let $f \colon \ZZ \to \RR^k$ be a bounded sequence. Then $f$ is a generalised polynomial if and only if there exists a connected, simply connected nilpotent Lie group $G$ of some dimension $D$, lattice $\Gamma < G$, a compatible filtration $G_\bullet$, a polynomial sequence $g \in \mathrm{poly}(\ZZ,G_\bullet)$ equidistributed modulo $\Gamma$, and a piecewise polynomial map $F \colon G/\Gamma \to \RR^k$ such that $f(n) = F(g(n)\Gamma)$ for all $n \in \ZZ$. 
\end{theorem}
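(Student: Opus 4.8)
The plan is to deduce this from Theorem~A of \cite{BergelsonLeibman-2007}, of which it is a corollary. That theorem states, for a bounded sequence $f \colon \ZZ \to \RR$, that $f$ is a generalised polynomial if and only if there exist a connected, simply connected nilpotent Lie group $G$, a lattice $\Gamma < G$, a compatible filtration $G_\bullet$, a polynomial sequence $g \in \mathrm{poly}(\ZZ,G_\bullet)$, and a piecewise polynomial map $F \colon G/\Gamma \to \RR$ with $f(n) = F(g(n)\Gamma)$ for all $n$ --- but \emph{without} any hypothesis on the distribution of $g$. The passage from $\RR$-valued to $\RR^k$-valued $f = (f_1,\dots,f_k)$ is routine: representing each $f_j$ as $F_j(g_j(n)\Gamma_j)$, one takes $G = G_1 \times \dots \times G_k$, $\Gamma = \Gamma_1 \times \dots \times \Gamma_k$ with the product filtration, $g(n) = (g_1(n),\dots,g_k(n))$ and $F = (F_1,\dots,F_k)$ precomposed with the coordinate projections, using that a tuple of piecewise polynomial maps on a product of nilmanifolds is again piecewise polynomial. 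Granting this, the ``if'' direction of the statement is immediate, since any $g$ and piecewise polynomial $F$ meeting our hypotheses in particular meet those of \cite[Thm.~A]{BergelsonLeibman-2007}, whence $f$ is a generalised polynomial.

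For the ``only if'' direction, \cite[Thm.~A]{BergelsonLeibman-2007} already furnishes $G, \Gamma, G_\bullet, g, F$ with $f(n) = F(g(n)\Gamma)$, so the only thing to add is that $g$ be equidistributed modulo $\Gamma$. Here I would pass to the orbit closure: by Leibman's theorem on polynomial orbits on nilmanifolds \cite{Leibman-2012} (see also \cite{BergelsonLeibman-2007}), the set $\overline{\{g(n)\Gamma : n \in \ZZ\}}$ is, up to a connectedness subtlety discussed below, a sub-nilmanifold $\bar X$ of $G/\Gamma$ in which $(g(n)\Gamma)_n$ equidistributes with respect to the natural measure. One then replaces $G/\Gamma$ by $\bar X$, equipped with a compatible Mal'cev basis in the sense of Section~\ref{sec:nilmanifolds}, and replaces $F$ by its restriction $F|_{\bar X}$, which is again piecewise polynomial in the induced coordinates; after this replacement $g$ is, by construction, equidistributed modulo the new lattice.

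The main obstacle is this last reduction rather than the invocation of \cite[Thm.~A]{BergelsonLeibman-2007}. First, one must check that piecewise polynomiality of $F$ survives restriction to a sub-nilmanifold: this amounts to unwinding the semialgebraic definitions and observing that $\bar X$ is cut out inside $G/\Gamma$ by polynomial conditions in Mal'cev coordinates, so that a semialgebraic partition of $G/\Gamma$ intersected with $\bar X$ is semialgebraic there. Second, and more seriously, the orbit closure of a polynomial sequence need not be connected --- already $g(n) = n/2$ on $\RR/\ZZ$ arises when representing the bounded generalised polynomial $\fp{n/2}$, and in general the closure is a disjoint union of $q$ translates $Z_0,\dots,Z_{q-1}$ of a connected sub-nilmanifold, one per residue of $n$ modulo $q$ --- whereas the conclusion demands the acting group be connected and simply connected. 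Reconciling these requires the standard device of splitting $f$ over the progressions $q\ZZ + r$, treating each $g(qm+r)$ (which does equidistribute in the connected $Z_r$) separately, and then reassembling the pieces into a single connected model, absorbing the cyclic factor of order $q$ into the group, e.g.\ by lifting the polynomial sequence to a linear one on a larger nilmanifold before taking orbit closures. Verifying that this gluing preserves both the polynomial form of the sequence and the piecewise polynomial form of $F$ is the part I would expect to be most delicate.
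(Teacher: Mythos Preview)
The paper gives no proof of this statement; it is simply labelled a corollary of \cite[Thm.~A]{BergelsonLeibman-2007} and left at that. Your sketch is therefore not competing with any argument in the paper but rather supplying one where none is offered.

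The outline you give --- product construction for $\RR^k$-valued sequences, then restriction to the orbit closure to force equidistribution, with care over connectedness --- is the standard route and is essentially correct. One small correction to your characterisation of the source: Theorem~A of \cite{BergelsonLeibman-2007} already carries an ergodicity hypothesis (the acting niltranslation is taken ergodic, so the orbit is equidistributed), so equidistribution is not entirely absent from the original. The genuine work in passing to the formulation stated here lies rather in moving from the linear-action setting $T^n x$ of \cite{BergelsonLeibman-2007} to the filtered polynomial-sequence language $g \in \mathrm{poly}(\ZZ,G_\bullet)$, and in arranging that the ambient group be connected and simply connected. Your discussion of the orbit closure decomposing into finitely many connected pieces indexed by residues modulo some $q$, and the need to reassemble these into a single connected model, is exactly where that effort goes; the concerns you flag about piecewise polynomiality surviving restriction and about the gluing step are real but routine, and are handled in the literature you cite.
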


 \color{normal}

\newcommand{\poly}{\operatorname{poly}}
\section{Equidistribution}

\color{checked} 

Following Green and Tao \cite{GreenTao-2012}, we will say that a finite sequence $(x_n)_{n=0}^{N-1}$ taking values in a nilmanifold $G/\Gamma$ is \emph{$\delta$-equidistributed} ($\delta > 0$) if for each Lipschitz function $F \colon G/\Gamma \to \RR$ we have 
\begin{equation}\label{eq:equi:def-d-equi}
	\abs{ \frac{1}{N} \sum_{n=0}^{N-1} F(x_n) - \int_{G/\Gamma} F(x) dx } \leq \delta \normLip{F},
\end{equation}
where the Lipschitz norm $\normLip{F}$ is defined as 
\begin{equation}\label{eq:equi:def-norm-Lip}
	\normLip{F} = \sup_{x} \abs{F(x)} + \sup_{x \neq y} \frac{\abs{F(x)-F(y)}}{d(x,y)}.
\end{equation}
As the name suggests, $\delta$-equidistribution is a quantitative refinement of the notion of equidistribution. Indeed, a sequence $(x_n)_{n=0}^\infty$ in $G/\Gamma$ is equidistributed if and only if for each $\delta > 0$ there exists $N_0$ such that for all $N \geq N_0$, the sequence $(x_n)_{n=0}^{N-1}$ is $\delta$-equidistributed. 

For a degree-$d$ polynomial sequence $f \colon \ZZ \to \RR$ with expansion
\begin{align}\label{eq:equi:46:1}
	f(n) &= a_d \binom{n}{d} + a_{d-1} \binom{n}{d-1} + \dots + a_{1} n + a_0,& n \in \ZZ,
\end{align}
and for an integer $N \in \NN$ we define the smoothness norm 
\begin{align}\label{eq:equi:def-C-infty}
	\norm{f}_{C^\infty[N]} = \max_{1 \leq i \leq d} N^i \fpa{a_i}. 
\end{align}
The expansion \eqref{eq:equi:46:1} has the advantage of being compatible with the discrete derivative; indeed, we have
\begin{align*}
f(n+1)-f(n) &= a_d \binom{n}{d-1} + a_{d-1} \binom{n}{d-2} + \dots + a_{0} n,& n \in \ZZ.
\end{align*}
As long as we do not need to worry about multiplicative factors dependent on $d$, we can equally well use the more typical expansion
\begin{align}\label{eq:equi:46:2}
	f(n) &= a_d' n^d + a_{d-1}' n^{d-1} + \dots + a_{1}' n + a_0',& n \in \ZZ,
\end{align}
and the corresponding smoothness norm
\begin{align}\label{eq:equi:def-C-infty-prime}
	\norm{f}_{C^\infty[N]}' = \max_{1 \leq i \leq d} N^i \fpa{a_i'}. 
\end{align}
Indeed, using the fact that the transition matrices used to pass from representation \eqref{eq:equi:46:1} to \eqref{eq:equi:46:2} or vice versa are upper triangular and have rational entries with denominators dividing $d!$ we see that
\begin{align}\label{eq:equi:smth-norm-equiv}
	\norm{d! f}_{C^\infty[N]}' &\ll_d \norm{f}_{C^\infty[N]}, & 
	\norm{d! f}_{C^\infty[N]} &\ll_d \norm{f}_{C^\infty[N]}'.	
\end{align}
The norm $\norm{d! f}_{C^\infty[N]}'$ behaves well when passing to arithmetic progressions.
\begin{lemma}\label{lem:equi:smth-AP}
	Let $f \colon \ZZ \to \RR$ be a degree-$d$ polynomial sequence, let $N \in \NN$, let $P = aM+b \subset [N]$ be an arithmetic progression of length $M = \mu N$, and let $\ell \colon [M] \to P$ be the parametrisation of $P$ given by $\ell(n) = an+b$. Then 
	\[ 
	\norm{a^d f}_{C^{\infty}[N]}' \ll_d (1/\mu^d)\norm{f \circ \ell}_{C^\infty[M]}'.
	\]
\end{lemma}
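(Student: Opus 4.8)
The plan is to pass to monomial coordinates and exploit the change‑of‑variables matrix linking the coefficients of $f$ to those of $f \circ \ell$. Write $f(n) = \sum_{i=0}^d c_i n^i$ and $(f\circ\ell)(n) = f(an+b) = \sum_{j=0}^d e_j n^j$. Expanding $(an+b)^i$ by the binomial theorem gives $e_j = a^j \sum_{i=j}^d \binom{i}{j} b^{i-j} c_i$; in matrix form $e = Tc$ with $T$ upper triangular and diagonal entries $a^j$. Since $T$ realises the substitution $n \mapsto an+b$, its inverse realises $n \mapsto (n-b)/a$, so that $c_i = \sum_{j=i}^d \binom{j}{i}(-b)^{j-i} a^{-j} e_j$, and after multiplying by $a^d$,
\[
a^d c_i = \sum_{j=i}^d \binom{j}{i}(-b)^{j-i} a^{d-j}\, e_j .
\]
The key observation is that each coefficient $\binom{j}{i}(-b)^{j-i}a^{d-j}$ is an \emph{integer} (here $j \le d$, so $a^{d-j}$ has non‑negative exponent); this is exactly what forces the normalising factor $a^d$ in the statement, as any smaller power of $a$ would in general leave a denominator.

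Granting the displayed identity, I would use two elementary facts about the circle norm: subadditivity $\fpa{x+y} \le \fpa{x}+\fpa{y}$, and $\fpa{mx} \le \abs{m}\fpa{x}$ for $m \in \ZZ$ (which follows from $\fpa{y} \le \abs{y}$). These give
\[
\fpa{a^d c_i} \le \sum_{j=i}^d \binom{j}{i}\, b^{j-i}\, a^{d-j}\, \fpa{e_j},
\]
and it remains to keep track of the powers of $N$, $M$, $a$, $b$. We have $b < N$, hence $b^{j-i} \le N^{j-i}$; and since $P = \{b, a+b, \dots, (M-1)a+b\} \subseteq [N]$ we get $(M-1)a < N$, so (assuming $M \ge 2$) $a < 2N/M = 2/\mu$, and therefore $a^{d-j} \le 2^{d-j}\mu^{\,j-d}$. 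Plugging these in and using $N = M/\mu$,
\[
N^i\, b^{j-i}\, a^{d-j}\, \fpa{e_j} \;\le\; 2^{d-j}\,\mu^{\,j-d}\, N^j\, \fpa{e_j} \;=\; 2^{d-j}\,\mu^{-d}\, M^j\, \fpa{e_j} \;\le\; 2^{d-j}\,\mu^{-d}\, \norm{f\circ\ell}_{C^\infty[M]}' ,
\]
where the last step uses $1 \le i \le j \le d$. Summing over $j$ and taking the maximum over $1 \le i \le d$ then yields $\norm{a^d f}_{C^\infty[N]}' \ll_d \mu^{-d}\,\norm{f\circ\ell}_{C^\infty[M]}'$, as wanted.

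This argument is essentially bookkeeping, so I do not expect a genuine obstacle; the two points one must get right are (a) identifying the inverse substitution matrix and checking that multiplication by $a^d$ clears all denominators — this is what pins down the exponent appearing in the statement — and (b) extracting $a = O(1/\mu)$ from the containment $P \subseteq [N]$, which is what converts the crude estimate into the sharp loss $\mu^{-d}$. One should also record that the implied constant genuinely depends on $d$, through $\sum_{j=i}^d \binom{j}{i} 2^{d-j} = O_d(1)$, in agreement with the $\ll_d$ in the statement.
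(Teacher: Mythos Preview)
Your proof is correct and follows essentially the same route as the paper: both invert the substitution $n \mapsto an+b$ to express the monomial coefficients of $a^d f$ as integer linear combinations of those of $f \circ \ell$, then bound $\fpa{\cdot}$ termwise using $b \le N$ and $a = O(1/\mu)$. The paper's write-up is slightly terser (it simply asserts $a \le 1/\mu$ rather than your more carefully derived $a < 2/\mu$), but the arguments are otherwise identical.
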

\begin{remark}
	With the same notation in Lemma \ref{lem:equi:smth-AP} above, we also have 	\(
	\norm{f}_{C^{\infty}[N]}' \gg_d \norm{f \circ \ell}_{C^\infty[M]}',
\)
although we will not need this estimate.
\end{remark}
\begin{proof}	
By definition, we have
	\begin{align*}
		(f \circ \ell)(m) &= \sum_{i=0}^d b_i' m^i, & m \in \ZZ,
	\end{align*}
	where $\abs{b_i'} \leq \norm{f \circ \ell}_{C^\infty[M]}'/M^i$. Thus, noting that $\ell^{-1}(n) = (n-b)/a$, we have
	\begin{align*}
		a^d f(n) &= a^d (f \circ \ell \circ \ell^{-1}) (n) = \sum_{i=0}^d b_i' a^{d-i} (n-b)^i
		\\ &= 
		\sum_{i=0}^d  \sum_{j=0}^i \binom{i}{j} b_i' a^{d-i}(-b)^{i-j}  n^j =: \sum_{j=0}^d c_j' n^j, & n \in \ZZ.
	\end{align*}
	Recalling the aforementioned estimate on $\fpa{b_i'}$ and noting that $a \leq 1/\mu$ and $b \leq N$ we conclude that for each $0 \leq j \leq d$ we have
	\begin{align*}
		\fpa{ c_j' } \ll_d \frac{ \norm{f \circ \ell}_{C^\infty[M]}'}{(\mu N)^i}  \cdot \frac{1}{\mu^{d-i}} \cdot N^{i-j} 
		\ll_d  \frac{1}{\mu^d} \cdot \frac{\norm{f \circ \ell}_{C^\infty[M]}'}{N^j},
	\end{align*}
	as needed.	
\end{proof}

\color{checked}

A horizontal character is a continuous homomorphism $\eta \colon G \to \RR/\ZZ$ which anihilates $\Gamma$ and hence (by a slight abuse of notation) can identified with a map $G/\Gamma \to \RR/\ZZ$. Mal'cev coordinates give a natural way to identify a horizontal character $\eta$ with an integer vector $k$, and we let $\abs{\eta} = \abs{k}$ denote the (supremum) norm. See \cite[Def.\ 2.6]{GreenTao-2012} for details.

We have now introduced all terminology needed to state (in a slightly simplified form) the main Theorem of \cite{GreenTao-2012}.

\begin{theorem}\label{thm:equi:GT}
	Fix a filtered nilmanifold $(G,G_\bullet,\Gamma)$. Let $\delta \in (0,1/2)$, $N \in \NN$, and $g \in \poly(G_\bullet)$. If the sequence $(g(n)\Gamma)_{n=0}^{N-1}$ is not $\delta$-equidistributed in $G/\Gamma$ then there exists a horizontal character $\eta \colon G/\Gamma \to \RR/\ZZ$ with $0 < \abs{\eta} \leq 1/\delta^{O(1)}$ such that $\norm{\eta \circ g}_{C^\infty[N]} \leq 1/\delta^{O(1)}$. (The implicit constants depend on $(G,G_\bullet,\Gamma)$.)
\end{theorem}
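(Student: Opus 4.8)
The plan is to recall the structure of Green and Tao's proof of Theorem~\ref{thm:equi:GT}, which proceeds by induction on the dimension $D=\dim G$ (equivalently, on the step $s$ of the filtration $G_\bullet$), the implicit exponents degrading in a controlled manner at each stage. The backbone of the induction is the vertical Fourier decomposition of a correlating function with respect to the central group $G_s$: either the mean-zero obstruction already lives on the lower-dimensional quotient $G/G_s\Gamma$, in which case one simply applies the inductive hypothesis there, or it is carried by a nontrivial central frequency, in which case a van der Corput manoeuvre reduces the problem to an auxiliary nilmanifold of strictly smaller complexity.

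First I would dispatch the \emph{base case} $s=1$, where $G/\Gamma$ is a torus and $g$ an honest polynomial sequence on it. Failure of $\delta$-equidistribution, tested against characters $x\mapsto e(k\cdot x)$ after convolving the offending Lipschitz function with a Fej\'er kernel (which restricts attention to frequencies $\abs{k}\le 1/\delta^{O(1)}$ while keeping $\normLip{\cdot}$ under control), produces a nonzero $k$ with $\bigl|\tfrac1N\sum_{n=0}^{N-1}e(k\cdot g(n))\bigr|\ge 1/\delta^{O(1)}$. The quantitative Weyl inequality, applied to the real polynomial $n\mapsto k\cdot g(n)$ and iterated downwards from the leading coefficient, then forces each coefficient $a_i$ to satisfy $\fpa{q a_i}\ll \delta^{-O(1)}/N^{i}$ for a single $q\ll\delta^{-O(1)}$ --- precisely the bound $\norm{k\circ g}_{C^\infty[N]}\le 1/\delta^{O(1)}$, so $\eta=k$ works.

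For the \emph{inductive step} I would fix a Lipschitz $F$ with $\int_{G/\Gamma}F=0$, $\normLip{F}\le 1$ and $\bigl|\tfrac1N\sum_n F(g(n)\Gamma)\bigr|\ge\delta$, let $G_s$ be the last nontrivial term of $G_\bullet$ (a central subgroup projecting to a subtorus of $G/\Gamma$), and expand $F=\sum_\xi F_\xi$ into vertical characters, so that $F_\xi(g_s\cdot x)=e(\xi(g_s))F_\xi(x)$; a Fej\'er convolution on the subtorus keeps $\normLip{F_\xi}$ bounded for $\abs{\xi}\le\delta^{-O(1)}$ and makes the tail negligible, whence some $\xi$ has $\bigl|\tfrac1N\sum_n F_\xi(g(n)\Gamma)\bigr|\ge\delta^{O(1)}$. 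If $\xi=0$, then $F_0$ descends to the step-$(s-1)$ nilmanifold $G/G_s\Gamma$, the projected sequence $\bar g$ there is not $\delta^{O(1)}$-equidistributed, and the inductive hypothesis supplies a horizontal character $\bar\eta$ with $0<\abs{\bar\eta}\le\delta^{-O(1)}$ and $\norm{\bar\eta\circ\bar g}_{C^\infty[N]}\le\delta^{-O(1)}$; since horizontal characters factor through the abelianisation $G/[G,G]$, this $\bar\eta$ lifts to a horizontal character $\eta$ of $G$ with $\eta\circ g=\bar\eta\circ\bar g$. If $\xi\ne 0$, I would square the correlation and apply van der Corput in an auxiliary shift $h$ (over a range $H\asymp\delta^{O(1)}N$), obtaining that for $\gg\delta^{O(1)}N$ values of $h$ the average $\tfrac1N\sum_n\overline{F_\xi(g(n)\Gamma)}\,F_\xi(g(n+h)\Gamma)$ has size $\ge\delta^{O(1)}$; since $(x,y)\mapsto\overline{F_\xi(x)}F_\xi(y)$ is invariant under the diagonal $G_s$-action (the frequencies $\pm\xi$ cancelling, $G_s$ being central), running $n\mapsto(g(n),g(n+h))$ turns this into a correlation on an auxiliary filtered nilmanifold $\widehat G/\widehat\Gamma$ built from $G\times G$ by quotienting out that diagonal central copy of $G_s$ --- of dimension below $2D$ and, crucially, with the $\xi$-direction already used up, so that the inductive hypothesis applies. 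For each good $h$ this yields a horizontal character $\eta_h$ of $\widehat G$, $0<\abs{\eta_h}\le\delta^{-O(1)}$, together with a smallness bound for $\norm{\eta_h\circ(n\mapsto(g(n),g(n+h)))}_{C^\infty[N]}$; as there are only $\delta^{-O(1)}$ such characters, pigeonholing fixes one $\eta^*$ good for $\gg\delta^{O(1)}N$ values of $h$, and unwinding $\eta^*$ along $n\mapsto(g(n),g(n+h))$ --- a polynomial in $n$ whose coefficients depend polynomially on $h$ and are assembled, via the group law, from the coefficients of $g$ --- feeds into a quantitative ``fibre'' lemma: a polynomial in $n$ with coefficients polynomial in $h$ that is $C^\infty[N]$-small for a positive proportion of $h\in[N]$ forces the pertinent coefficients of $g$ to lie close to rationals with bounded denominators, which produces the desired horizontal character $\eta$ of $G$ with $0<\abs{\eta}\le\delta^{-O(1)}$ and $\norm{\eta\circ g}_{C^\infty[N]}\le\delta^{-O(1)}$.

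The hard part will be the case $\xi\ne 0$: constructing the auxiliary filtered nilmanifold $\widehat G/\widehat\Gamma$ together with a compatible Mal'cev basis whose complexity is bounded in terms of that of $(G,G_\bullet,\Gamma)$, and checking that its dimension (or step) genuinely drops so that the induction can be fed; and the ``fibre'' descent lemma, which is a multidimensional quantitative Weyl / geometry-of-numbers statement and is where the ultimate polynomial dependence of the exponent on $(G,G_\bullet,\Gamma)$ gets pinned down. One must also keep careful account of the tower of $\delta^{O(1)}$ losses accumulated through the induction on $D$ so that the final bound retains the shape claimed in the theorem.
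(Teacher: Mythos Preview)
The paper does not prove this theorem at all: it is stated as ``the main Theorem of \cite{GreenTao-2012}'' and simply quoted, with no argument given. So there is nothing to compare your proposal against in the paper itself.

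That said, what you have written is a reasonable high-level summary of Green and Tao's original proof. A couple of small inaccuracies: the induction is not literally on $\dim G$, nor is dimension ``equivalent'' to the step $s$; the actual inductive invariant in \cite{GreenTao-2012} is a more delicate quantity built from the dimensions $\dim G_i - \dim G_{i+1}$ along the filtration, and the point of the van der Corput step is that this invariant strictly decreases for the auxiliary nilmanifold (the raw dimension of $\widehat G$ can exceed $D$). Also, in the $\xi\neq 0$ case the relevant nilmanifold is not just a quotient of $G\times G$ by the diagonal $G_s$; one works inside a suitable ``square'' group $G\times_{G_s} G$ (or its quotient), and showing that the resulting polynomial sequence lives in the right filtered subgroup is part of the work. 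But these are refinements of an otherwise faithful outline; for the purposes of this paper the theorem is a black box.
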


In the applications we have in mind, we will need a slightly different notion related to uniform distribution of a sequence. We will tentatively say that a sequence $(x_n)_{n=0}^{N-1}$ is \emph{$\rho$-dense} if for each arithmetic progression $P \subset [N]$ with $\abs{P} \geq \rho N$ and each radius-$\rho$ ball $B \subset G/\Gamma$ there is at least one $n \in P$ with $g(n)\Gamma \in B$. We have the following, fairly standard, corollary of Theorem \ref{thm:equi:GT}.
\begin{proposition}\label{prop:equi:rho-dense}
	Fix a filtered nilmanifold $(G,G_\bullet,\Gamma)$. Let $\rho \in (0,1/2)$, $N \in \NN$, and $g \in \poly(G_\bullet)$. If the sequence $(g(n)\Gamma)_{n=0}^{N-1}$ is not $\rho$-dense in $G/\Gamma$ then there exists a horizontal character $\eta \colon G/\Gamma \to \RR/\ZZ$ with $0 < \abs{\eta} \leq 1/\rho^{O(1)}$ such that $\norm{\eta \circ g}_{C^\infty[N]} \leq 1/\rho^{O(1)}$. (The implicit constants depend on $(G,G_\bullet,\Gamma)$.)
\end{proposition}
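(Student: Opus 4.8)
The plan is to deduce the statement from Theorem~\ref{thm:equi:GT} by restricting $g$ to the arithmetic progression that witnesses the failure of $\rho$-density and then transporting the resulting smoothness estimate back to the full interval $[N]$ using Lemma~\ref{lem:equi:smth-AP}. Throughout, $d$ denotes the degree of the filtration $G_\bullet$, and all implicit constants are allowed to depend on $(G,G_\bullet,\Gamma)$, hence in particular on $d$.

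Suppose $(g(n)\Gamma)_{n=0}^{N-1}$ is not $\rho$-dense, and fix an arithmetic progression $P = \set{an+b}{n \in [M]} \subseteq [N]$ with $M \geq \rho N$ and a radius-$\rho$ ball $B \subseteq G/\Gamma$, with centre $x_0$ say, such that $g(n)\Gamma \notin B$ for all $n \in P$. Put $\ell(n) = an+b$; since restrictions of polynomial sequences to progressions stay polynomial, $g\circ\ell \in \poly(G_\bullet)$, and the sequence $(g(\ell(n))\Gamma)_{n=0}^{M-1}$ avoids $B$. First I would record that a sequence avoiding $B$ cannot be too equidistributed: the tent function $F(x) = \max\bigl(0,\,1 - \tfrac{2}{\rho}d(x,x_0)\bigr)$ is supported in the concentric ball of radius $\rho/2$ (so in $B$), has $\normLip{F} \ll 1/\rho$, and — by the standard bound that the Haar measure of any radius-$r$ ball in $G/\Gamma$ is $\gg r^D$ — satisfies $\int_{G/\Gamma}F(x)\,dx \gg \rho^D$. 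Since $F$ vanishes along the sequence, $\delta$-equidistribution of $(g(\ell(n))\Gamma)_{n=0}^{M-1}$ would force $\rho^D \ll \delta/\rho$; hence for a suitable choice $\delta := c\rho^{D+1} \in (0,1/2)$ (with $c$ small enough) the sequence is \emph{not} $\delta$-equidistributed. Theorem~\ref{thm:equi:GT} then yields a horizontal character $\eta$ with $0 < \abs{\eta} \leq 1/\delta^{O(1)} = 1/\rho^{O(1)}$ and $\norm{\eta\circ(g\circ\ell)}_{C^\infty[M]} \leq 1/\delta^{O(1)} = 1/\rho^{O(1)}$.

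It remains to move this bound from $[M]$ back to $[N]$. Let $f\colon\ZZ\to\RR$ be a polynomial lift of $\eta\circ g$, so that $f\circ\ell$ lifts $\eta\circ(g\circ\ell)$ and $\norm{f\circ\ell}_{C^\infty[M]} \leq 1/\rho^{O(1)}$. Applying the norm equivalence \eqref{eq:equi:smth-norm-equiv}, then Lemma~\ref{lem:equi:smth-AP} to $d!\,f$ (using $(d!\,f)\circ\ell = d!\,(f\circ\ell)$), then \eqref{eq:equi:smth-norm-equiv} once more, and using $\mu := M/N \geq \rho$ together with $a \leq 1/\mu \leq 1/\rho$, one obtains
\[
	\norm{(d!)^2 a^d f}_{C^\infty[N]} \ll \frac{1}{\mu^d}\,\norm{f\circ\ell}_{C^\infty[M]} \leq \frac{1}{\rho^d}\cdot\frac{1}{\rho^{O(1)}} = \frac{1}{\rho^{O(1)}}.
\]
Now set $\eta' := (d!)^2 a^d\,\eta$. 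This is a positive-integer multiple of $\eta$, hence again a nontrivial horizontal character, with $\abs{\eta'} = (d!)^2 a^d\abs{\eta} \leq 1/\rho^{O(1)}$; and $(d!)^2 a^d f$ is a polynomial lift of $\eta'\circ g$, so $\norm{\eta'\circ g}_{C^\infty[N]} \leq 1/\rho^{O(1)}$, which is the required conclusion (after renaming $\eta'$ to $\eta$).

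The only genuine obstacle is this last transfer step. One cannot hope to bound $\norm{\eta\circ g}_{C^\infty[N]}$ directly from $\norm{\eta\circ(g\circ\ell)}_{C^\infty[M]}$, because rescaling an arithmetic progression multiplies the relevant Taylor coefficients by powers of the common difference $a$, so their smallness modulo $1$ is in general destroyed; the fix is to pre-multiply $\eta$ by the harmless integer factor $(d!)^2 a^d$, which (since $a \leq 1/\rho$) is itself of size $1/\rho^{O(1)}$ and therefore does not spoil the bounds. Everything else — the tent-function construction, the lower bound on the volume of small balls, and the stability of $\poly(G_\bullet)$ under restriction to progressions — is routine.
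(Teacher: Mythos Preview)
Your proof is correct and follows essentially the same route as the paper: restrict to the witnessing progression, use a tent function to contradict $\delta$-equidistribution for $\delta \asymp \rho^{D+1}$, apply Theorem~\ref{thm:equi:GT}, and then transfer back to $[N]$ via Lemma~\ref{lem:equi:smth-AP} by passing to the character $(d!)^2 a^d\,\eta$. The only cosmetic differences are that you take the tent supported in the half-radius ball and spell out the two applications of the norm equivalence~\eqref{eq:equi:smth-norm-equiv} more explicitly than the paper does.
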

\begin{proof}
	Since the sequence $(g(n)\Gamma)_{n=0}^{N-1}$ is not $\rho$-dense, we can find an arithmetic progression $P = a[M]+b\subset [N]$ with $\abs{P} = M \geq \rho N$ and a radius-$\rho$ ball $\Ball(z,\rho) \subset G/\Gamma$ such that $g(n)\Gamma \not \in \Ball(z,\rho)$ for all $n \in P$. Let $\ell \colon [M] \to P$ be the parametrisation of $P$ given by $\ell(n) = an+b$. It is routine to construct $F \colon G/\Gamma \to [0,1]$ such that $F(x) = 0$ if $d(x,z) \geq 1$, $\normLip{F} \ll 1/\rho$ and $\int_{G/\Gamma} F(x) dx \gg \rho^{\dim G}$ (for instance, one can take $F(x) = \max(0,1-d(x,z)/\rho)$).

	Suppose that for some $\delta > 0$, the sequence $(g(\ell(n))\Gamma)_{n=0}^{M-1}$ is $\delta$-equidistributed. Then
	\begin{align}\label{eq:equi:77:1}
		\int_{G/\Gamma} F(x) dx &=
		\abs{ \sum_{n=0}^{M-1} F(g(\ell(n))\Gamma - \int_{G/\Gamma} F(x) dx} \leq \delta \normLip{F}.
	\end{align}
	Thus, we can find $\delta \gg \rho^{\dim G+2}$ such that $(g(\ell(n))\Gamma)_{n=0}^{M-1}$ is not $\delta$-equidistributed. By Theorem \ref{thm:equi:GT}, we can find a horizontal character $\eta \colon G/\Gamma \to \RR/\ZZ$ with $0 < \abs{\eta} \leq 1/\delta^{O(1)} \leq 1/\rho^{O(1)}$ such that  $\norm{\eta \circ g \circ \ell}_{C^\infty[M]} \leq 1/\delta^{O(1)} \leq 1/\rho^{O(1)}$. Applying Lemma \ref{lem:equi:smth-AP} and replacing $\eta$ with $(d!)^2 a^d \eta$ if necessary, we conclude that $\norm{\eta \circ g}_{C^\infty[N]} \leq 1/\rho^{O(1)}$, as needed.
\end{proof}

Another ingredient which we will need is the following variant of Weyl's theorem along the primes, due to Rhin \cite{Rhin-1973}. We let $\cP = \{2,3,5,\dots,\}$ denote the set of primes. We will say that a sequence $(x_p)_{p \in \cP}$ is equidistributed in some space $\Omega$ if $(x_{p_n})_{n=0}^\infty$ is equidistributed in $\Omega$, where $(p_n)_{n=0}^\infty$ is the increasing enumeration of $\cP$.

\begin{theorem}\label{thm:equi:Rh}
	Let $f \colon \ZZ \to \RR$ be a polynomial sequence. Suppose that $f(x) = \sum_{i=0}^d a_i x^d$ ($x \in \RR$) and that $a_i$ is irrational for at least one $1 \leq i \leq d$. Then the sequence $(f(p) \bmod 1)$ is equidistributed in $\RR/\ZZ$.
\end{theorem}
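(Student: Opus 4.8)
The plan is to reduce the equidistribution assertion to a cancellation estimate for exponential sums over the primes, and then to attack that estimate by Vinogradov's method. Write $e(t) = e^{2\pi i t}$. By Weyl's criterion, the sequence $(f(p) \bmod 1)_{p \in \cP}$ is equidistributed in $\RR/\ZZ$ if and only if $\frac{1}{\pi(N)}\sum_{p \le N} e(hf(p)) \to 0$ as $N \to \infty$ for every nonzero $h \in \ZZ$; since $hf$ again has an irrational coefficient of positive degree, it suffices to take $h = 1$. A routine reduction --- writing $f$ as the sum of a function periodic modulo some $Q \in \NN$ (accounting for the rational coefficients) and a polynomial all of whose positive-degree coefficients are irrational, then restricting $p$ to the progressions modulo $Q$ and absorbing the resulting linear phases by additive characters --- lets us moreover assume that $f$ has \emph{irrational leading coefficient} $a_d$. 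Finally, by a standard partial-summation argument together with the observation that proper prime powers contribute $O(\sqrt{N}\log^2 N)$, the goal becomes the von Mangoldt--weighted bound $\sum_{n \le N}\Lambda(n)e(f(n)) = o(N)$, where $\Lambda$ denotes the von Mangoldt function.

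To estimate this sum I would apply Vaughan's identity, which expresses $\sum_{n \le N}\Lambda(n)e(f(n))$ as a combination of $O(\log^2 N)$ sums of two shapes: ``Type I'' sums $\sum_{m \le M} c_m \sum_{\ell \le N/m} e(f(m\ell))$ with $\abs{c_m} \ll \log N$ and $M$ below an appropriate threshold, and ``Type II'' sums $\sum_{M_1 < m \le M_2}\sum_\ell c_m d_\ell\, e(f(m\ell))$ with $\abs{c_m},\abs{d_\ell} \ll_\epsilon N^\epsilon$ and $M_1, M_2$ in an intermediate range. Only the admissibility of these ranges will matter, so I would fix the Vaughan parameters at the end so as to balance the various contributions.

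For a Type I sum, the inner sum runs over the values of the polynomial $\ell \mapsto f(m\ell) = \sum_i a_i m^i \ell^i$, whose leading coefficient $a_d m^d$ is irrational for every $m \ge 1$; Weyl's inequality then yields
\[
	\left| \sum_{\ell \le N/m} e\bigl(f(m\ell)\bigr) \right| \ll_\epsilon (N/m)^{1+\epsilon} \left( 1/q_m + m/N + q_m m^d / N^d \right)^{1/2^{d-1}}
\]
whenever $a_d m^d$ admits a reduced-fraction approximation $b/q_m$ with $\abs{a_d m^d - b/q_m} \le 1/q_m^2$ and $q_m \le (N/m)^d$. For a Type II sum, Cauchy--Schwarz in $m$ followed by expansion of the square reduces matters to the polynomial exponential sums $\sum_m e\bigl(f(m\ell) - f(m\ell')\bigr)$ over pairs $\ell \neq \ell'$, whose leading coefficient in $m$ is $a_d(\ell^d - (\ell')^d) \notin \QQ$, so that Weyl's inequality applies once more, the diagonal $\ell = \ell'$ being bounded trivially. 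Summing everything and choosing the Vaughan parameters suitably should produce the desired $o(N)$.

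The crux --- and the reason the theorem is credited to Rhin rather than being an immediate corollary of Weyl's inequality --- is the \emph{uniformity} of these estimates over $m$ (respectively over the pairs $\ell \neq \ell'$): one must show that $a_d m^d$ (respectively $a_d(\ell^d - (\ell')^d)$) fails to lie on a major arc --- i.e.\ is not abnormally close to a rational with small denominator --- except for a set of summation variables whose contribution, estimated trivially, is still $o(N)$; in particular the major arcs must not be allowed to swamp the estimate. It is precisely here that the irrationality of $a_d$ enters in an essential, rather than cosmetic, way, and turning this into a quantitative statement --- hence into an explicit discrepancy bound --- requires either a Diophantine hypothesis on $a_d$ or a more delicate averaging argument; this is the technical heart of \cite{Rhin-1973}. (For the purely qualitative statement one may instead observe that a polynomial with an irrational coefficient of positive degree is totally equidistributed on the circle, whence $\sum_{n \le N}\Lambda(n)e(f(n)) = o(N)$ also follows from the known orthogonality of $\Lambda$ to equidistributed polynomial phases; but since that machinery postdates --- and ultimately rests upon --- the circle-method estimates sketched above, the classical route is the natural one here.)
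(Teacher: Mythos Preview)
The paper does not prove Theorem~\ref{thm:equi:Rh}; it is quoted from Rhin \cite{Rhin-1973} and used as a black box in the proposition that follows. There is thus no proof in the paper to compare your proposal against.

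Your outline is nonetheless a faithful sketch of the classical route via Vaughan's identity and Weyl differencing, and you correctly identify the substantive difficulty --- the uniformity of the Weyl-type bounds in the auxiliary summation variables $m$ (Type~I) and $\ell,\ell'$ (Type~II) --- and honestly defer it to the cited source rather than papering over it. One small wording issue in your reduction step: the rational-coefficient part of $f$ is not literally a periodic function of $n$, but $e(\cdot)$ of it is periodic with period the lcm $Q$ of the denominators, which is all you need for the split into residue classes. Your closing parenthetical, deducing the qualitative statement from orthogonality of $\Lambda$ to equidistributed polynomial phases, is accurate and --- while anachronistic as a proof of Rhin's theorem --- is in fact close in spirit to how the present paper deploys the result alongside the Green--Tao equidistribution machinery.
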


In particular, bearing in mind the prime number theorem, we conclude that for each polynomial sequence $f \colon \ZZ \to \RR$ with at least one irrational coefficient other than the constant term and for each interval $I \subset [0,1)$, the number of primes $p \in [X,2X)$ such that $\fp{f(p)} \in I$ is $( \abs{I} + o_{X\to\infty}(1))X/\log X$.

\begin{remark}
	The behaviour of more complicated generalised polynomials along the primes was further investigated in \cite{GreenTao-2012-Mobius} and \cite{BergelsonHalandSon-2021} (see also \cite{BKS-2019}). However, we will not make use of these results.
\end{remark}

The following technical result, obtained by combining Theorem \ref{thm:equi:GT} and Theorem \ref{thm:equi:Rh}, is at the heart of our argument.

\begin{proposition}
	Fix a filtered nilmanifold $(G,G_\bullet,\Gamma)$ and a polynomial sequence $g \in \poly(G_\bullet)$ such that $(g(n)\Gamma)_n$ is equidistributed in $G/\Gamma$. Then for each $\delta \in (0,1/2)$ there exists $N_0 > 0$ such that for all $N \geq N_0$ there exists $X_0 > 0$ such that for all $X \geq X_0$ there are   $\brabig{1-O_\delta(1/N)}X/{\log X}$ primes $p \in [X,2X)$ such that the sequence $\brabig{\bra{g(n)\Gamma, g(pn)\Gamma}}_{n=0}^{N-1}$ is $\delta$-dense in $G/\Gamma$.
\end{proposition}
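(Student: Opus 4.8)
The plan is to deduce the proposition from the quantitative equidistribution result Proposition~\ref{prop:equi:rho-dense}, applied to the product nilmanifold. Put $\tilde G = G \times G$, $\tilde\Gamma = \Gamma \times \Gamma$, with product filtration $\tilde G_i = G_i \times G_i$ and product Mal'cev basis, so that $(\tilde G, \tilde G_\bullet, \tilde\Gamma)$ is again a filtered nilmanifold. For a prime $p$ the sequence $\tilde g_p(n) := \bra{g(n), g(pn)}$ lies in $\poly(\tilde G_\bullet)$: indeed $n \mapsto g(pn)$ is the restriction of $g$ to the progression $p\ZZ$ and hence polynomial, and a pair of polynomial sequences is polynomial for the product filtration. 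Note Proposition~\ref{prop:equi:rho-dense} does not require its sequence to be equidistributed, which is essential here, as $\tilde g_p$ will typically \emph{not} equidistribute in $\tilde G/\tilde\Gamma$. By the contrapositive of that proposition, with $C = C(G) \geq 1$ its absolute implied exponent, the sequence $\brabig{\tilde g_p(n)\tilde\Gamma}_{n=0}^{N-1}$ is $\delta$-dense (in $\tilde G/\tilde\Gamma = (G/\Gamma)^2$, the space in which it takes values) as soon as
\[
	\norm{\tilde\eta \circ \tilde g_p}_{C^\infty[N]} > \delta^{-C} \quad \text{for every horizontal character } \tilde\eta \text{ of } \tilde G \text{ with } 0 < \abs{\tilde\eta} \leq \delta^{-C}.
\]
Each such $\tilde\eta$ has the form $\tilde\eta(x,y) = \eta_1(x) + \eta_2(y)$ with $\eta_1,\eta_2$ horizontal characters of $G$, not both trivial, $\abs{\eta_1},\abs{\eta_2} \ll \delta^{-C}$; there are only $\delta^{-O(1)}$ of them. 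Writing $q_p(n) := \eta_1(g(n)) + \eta_2(g(pn))$, it suffices to show that for each fixed non-trivial pair $(\eta_1,\eta_2)$, once $N \geq N_0(\delta)$, all but $\brabig{O_\delta(1/N) + o_{X\to\infty}(1)}X/\log X$ primes $p \in [X,2X)$ satisfy $\norm{q_p}_{C^\infty[N]} > \delta^{-C}$.

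The structural input we exploit is that, since $(g(n)\Gamma)_n$ is equidistributed, no non-trivial $\eta\circ g$ takes only finitely many values modulo $1$: otherwise $(g(n)\Gamma)_n$ would lie in the $\eta$-preimage of a finite set, which has Haar measure zero because the push-forward of the Haar measure under a non-trivial horizontal character is a translation-invariant, hence atomless, probability measure on $\RR/\ZZ$. In particular no non-trivial $\eta\circ g$ is periodic modulo $1$, so—expanding in the binomial basis, and using that a polynomial all of whose positive-degree binomial coefficients are rational is periodic modulo $1$—every non-trivial $\eta\circ g$ has an irrational binomial coefficient in some degree $\geq 1$. Now expand $q_p$: writing $\binom{pn}{i} = \sum_{j \leq i} c_{ij}(p)\binom{n}{j}$, where each $c_{ij}$ is an integer-valued polynomial in $p$ of degree $i$ with $c_{ii}(p) = p^i$, the coefficient of $\binom{n}{j}$ in $q_p$ is $\gamma_j(p) := \alpha_j + \sum_{i \geq j} \beta_i\, c_{ij}(p)$, where $(\alpha_i)$ and $(\beta_i)$ are the binomial coefficients of $\eta_1\circ g$ and $\eta_2\circ g$. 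Thus $\norm{q_p}_{C^\infty[N]} = \max_{1 \leq j \leq d} N^j \fpa{\gamma_j(p)}$, and it remains to find, for each pair, an index $j$ with $\fpa{\gamma_j(p)}$ bounded below for almost all primes $p$.

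If $\eta_2$ is non-trivial, let $i^\circ \geq 1$ be the largest degree of an irrational binomial coefficient of $\eta_2\circ g$; then $\beta_i \in \QQ$ for $i > i^\circ$, so in the polynomial $p \mapsto \gamma_{i^\circ}(p)$ the coefficient of $p^{i^\circ}$ equals $\beta_{i^\circ}$ plus a rational number (the contributions of the $c_{i,i^\circ}$ with $i > i^\circ$ are rational), hence is irrational. By Rhin's theorem (Theorem~\ref{thm:equi:Rh}) and the prime number theorem, $\brabig{\gamma_{i^\circ}(p) \bmod 1}_{p \in \cP}$ is equidistributed, so for $X$ large the number of primes $p \in [X,2X)$ with $\fpa{\gamma_{i^\circ}(p)} \leq \delta^{-C}/N^{i^\circ}$ is at most $\brabig{O_\delta(1/N) + o_{X\to\infty}(1)}X/\log X$, and every other prime satisfies $\norm{q_p}_{C^\infty[N]} \geq N^{i^\circ}\fpa{\gamma_{i^\circ}(p)} > \delta^{-C}$. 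If $\eta_2$ is trivial (so $\eta_1$ is not), then $\gamma_j(p) = \alpha_j$ is independent of $p$, and picking $j_0 \geq 1$ with $\alpha_{j_0}$ irrational gives $\norm{q_p}_{C^\infty[N]} \geq N^{j_0}\fpa{\alpha_{j_0}} > \delta^{-C}$ for \emph{all} primes $p$ once $N$ exceeds some $N_0(\delta)$—here we use that the finitely many pairs under consideration involve only finitely many positive reals $\fpa{\alpha_{j_0}}$. Enlarging $N_0(\delta)$ so that also $\delta^{-C}/N_0^{i^\circ} < 1/2$, then choosing $X_0(N,\delta)$ so large that each of the finitely many $o_{X\to\infty}(1)$ terms is $\leq 1/N$ and $\pi(2X) - \pi(X) \geq (1-1/N)X/\log X$, and summing the counts over the $\delta^{-O(1)}$ pairs, we obtain the asserted $\brabig{1 - O_\delta(1/N)}X/\log X$ good primes.

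The crux—and essentially the only obstacle—is the rational/irrational dichotomy above. The danger is that $q_p$ could be constant modulo $1$, i.e.\ $\fpa{\gamma_j(p)} = 0$ for every $j \geq 1$, for a \emph{positive-density} set of primes $p$ rather than a set of relative density $O_\delta(1/N)$; this would indeed happen if, say, some $\eta_i\circ g$ had only rational binomial coefficients, since then $q_p \bmod 1$ would depend only on $p$ modulo a fixed period. It is precisely the equidistribution of $g$ that excludes this, by forcing every non-trivial $\eta\circ g$ to carry an irrational binomial coefficient in positive degree; this is what makes the coefficient of $p^{i^\circ}$ in $\gamma_{i^\circ}$ genuinely irrational and permits Rhin's theorem to be applied, uniformly over the finitely many pairs $(\eta_1,\eta_2)$. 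A secondary, routine check is that the ``dilation tail'' $\sum_{i > i^\circ} \beta_i c_{i,i^\circ}(p)$ is rational of bounded denominator, which holds because the $c_{ij}$ have rational coefficients and $\beta_i \in \QQ$ for $i > i^\circ$.
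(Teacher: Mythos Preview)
Your proof is correct and follows essentially the same strategy as the paper: apply Proposition~\ref{prop:equi:rho-dense} on the product nilmanifold, split the resulting horizontal character as $\tilde\eta = \eta_1 + \eta_2$, use equidistribution of $g$ to force an irrational coefficient, and invoke Rhin's theorem to bound the bad primes. The one noteworthy difference is that you expand $\eta_2 \circ g$ in the \emph{binomial} basis, which obliges you to track the ``dilation tail'' coming from $\binom{pn}{i} = \sum_{j\le i} c_{ij}(p)\binom{n}{j}$ and to argue that the higher-index contributions to the $p^{i^\circ}$-coefficient of $\gamma_{i^\circ}$ are rational; the paper instead expands in the \emph{monomial} basis $n^i$, where $\lambda\circ g(pn) = \sum_i a_i p^i n^i$ directly, so the relevant polynomial in $p$ is simply $p \mapsto p^i a_i + b_i$ and no tail analysis is needed. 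Both routes are valid; the monomial basis just shortens this step. (A minor imprecision: your claim that $c_{ij}$ has degree exactly $i$ in $p$ is not always true, e.g.\ $c_{i0}=0$, but only the bound $\deg c_{ij}\le i$ and the rationality of its coefficients are used.)
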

\begin{proof}
	Fix $\delta > 0$; throughout the argument we let all implicit constants depend on $\delta$, as well as on $(G,G_\bullet,\Gamma)$ and $g$. For a prime $p$, let $g_p \colon \ZZ \to G$ and $\tilde g_p \colon \ZZ \to G \times G$ be the polynomial sequences given by $g_p(n) = g(pn)$ and $\tilde g_p(n) = (g(n),g_p(n))$. We assume that $N$ is sufficiently large in terms of $\delta$ and $(G,G_\bullet,\Gamma)$ and that $X$ is sufficiently large in terms of $N$. 
	
	Consider a prime $p \in [X,2X)$ such that the sequence $\brabig{\bra{g(n)\Gamma, g(pn)\Gamma}}_{n=0}^{N-1}$ is not $\delta$-dense. It follows from Theorem \ref{prop:equi:rho-dense} that there exists a horizontal character $\eta \colon (G/\Gamma)\times(G/\Gamma) \to \RR/\ZZ$ with $0 \neq \abs{\eta} = O(1)$ such that $\norm{\eta \circ \tilde g_p}_{C^\infty[N]} = O(1)$. Since the prime number theorem implies that there are $(1+o_{X\to\infty}(1))X/\log X$ primes in $[X,2X)$ and since the number of possible choices of $\eta$ is $O(1)$, it will suffice to show that for each choice of a horizontal character $\eta$ and each constant $C > 0$ there are $O\bra{ N^{-1} X/\log X}$ primes $p$ such that $\norm{\eta \circ \tilde g_p}_{C^\infty[N]} \leq C$. For brevity, let us refer to such primes as \emph{bad}.
	
We can write $\eta$ in the form $\eta(x,y) = \kappa(x) + \lambda(y)$ where $\kappa,\lambda \colon G/\Gamma \to \RR/\ZZ$ are horizontal characters with $\abs{\kappa},\abs{\lambda} = O(1)$. Hence, $\eta \circ \tilde g_p = \kappa \circ g + \lambda \circ g_p$. Writing 
\begin{align}
	\kappa \circ g(n) &= \sum_{i=0}^d b_i n^i, &
	\lambda \circ g(n) &= \sum_{i=0}^d a_i n^i, & n \in \ZZ,
\end{align}
we have $\norm{\eta \circ \tilde g_p}_{C^\infty[N]} = \max_{1 \leq i \leq d} N^i\fpa{p^i a_i + b_i}$. Since $\bra{g(n)\Gamma}_{n}$ is equidistributed, at least one of the coefficients $a_i$ ($1 \leq i \leq d$) is irrational, unless $\lambda = 0$. The same remark applies to the coefficients $b_i$ and the character $\kappa$.

If $\lambda = 0$ then $\kappa \neq 0$ and we have
\begin{align*}
\norm{\eta \circ \tilde g_p}_{C^\infty[N]} = \norm{\kappa \circ g}_{C^\infty[N]} = \max_{1 \leq i \leq d} N^i \fpa{b_i}.
\end{align*}
If $N \geq C/\max_{i \leq i \leq d} \fpa{b_i}$ then there are not bad primes; otherwise we bound their number trivially by $2X/\log X$. In either case, the number of bad primes is bounded by $(2C/\max_{i \leq i \leq d} \fpa{b_i}) N^{-1} X/\log X$, as needed.

Suppose next that $\lambda \neq 0$. Pick any $1 \leq i \leq d$ such that $a_i$ is irrational. For each bad prime we have, in particular, $\fpa{p^i a_i + b_i} \leq C/N^i$. Hence, it follows from Theorem \ref{thm:equi:Rh} that for sufficiently large $X$, the number of bad primes is bounded by $4C N^{-i} X/\log X \ll N^{-1} X/\log X$, as needed.
\end{proof}

\color{black} \color{normal}

\section{Proof of the Main Theorem}\color{checked}

We have now collected all the ingredients needed to prove Theorem \ref{thm:intro:main}.

Recall that we can write $f$ is the form \eqref{eq:bgrnd:exp-gp}:
\begin{align}\label{eq:proof:exp-gp}
	f(n) &= f_d(n)n^d + \dots + f_1(n) n + f_0(n), & n \in \ZZ,
\end{align}
with $d \in \NN$ and $f_i \colon \ZZ \to \RR$ bounded. By Theorem \ref{thm:BL} we can find a filtered nilmanifold $(G,G_\bullet, \Gamma)$, a sequence $g \in \poly(G_\bullet)$ equidistributed modulo $\Gamma$, and piecewise polynomial maps $F_{i,r} \colon G/\Gamma \to \CC$ ($0 \leq i \leq d$, $0 \leq r < Q$) such that
\begin{align}\label{eq:proof:f=F}
	f_i(n) &= F_{i,r}(g(n)\Gamma), & n \equiv r \bmod Q.
\end{align}

We claim that for each $0 \leq r < Q$ either $F_{i,r} = 0$ a.e.\ for all $0 \leq i \leq d$, or there exists exactly one $0 \leq d_r' \leq d$ and $\lambda_r \in \CC \setminus \{0\}$ such that $F_{i,r} = \lambda_r$ a.e.\ for $i = d_r'$ and $F_{i,r} = 0$ a.e.\ for $i \neq d_r'$. We argue for each $r$ independently. If $F_{i,r} = 0$ a.e.\ for all $0 \leq i \leq d$ then we are done, so suppose that this is not the case. For the sake of notational clarity, we assume that $F_{d,r}$ is not a.e.\ zero, meaning that we aim to prove the claim with $d_{r}' = d$. In the general case we define $d_r'$ to be the largest index $i$ with $F_{i,r}$ not a.e.\ zero and argue similarly. We proceed by induction on $0 \leq i \leq d$, in descending order.

Fix $\delta > 0$. By Proposition \ref{prop:equi:rho-dense} (and the prime number theorem in arithmetic progressions), for each sufficiently large $N$ and for each $X$ sufficiently large in terms of $N$ we can find $p \in [X,2X)$ such that $p \equiv 1 \bmod Q$ and the sequence $\brabig{g(n)\Gamma,g(pn)\Gamma}_{n=0}^{N-1}$ is $\delta$-dense. 

Pick any $x_1,y \in G/\Gamma$ and $t_1 \in [1/2,1]$. Because of $\delta$-density, we can find an integer $n_1$ such that 
\begin{align}\label{eq:proof:def-n-1}
	n_1 &\equiv r \bmod Q, &
	\abs{\frac{n_1}{N} - t_1} &\leq \delta,&
	d\brabig{ g(n_1)\Gamma, x_1} &\leq Q\delta,&
	d\brabig{ g(pn_1)\Gamma, y} &\leq \delta.	
\end{align}
By the same token, for any $x_2 \in G/\Gamma$ and $t_2 \in [1/2,1]$ we can find an integer $n_2$ such that
\begin{align}\label{eq:proof:def-n-2}
	n_2 &\equiv r \bmod Q, &
	\abs{\frac{n_2}{N} - t_2} &\leq \delta,&
	d\brabig{ g(n_2)\Gamma, x_2} &\leq Q\delta,&
	d\brabig{ g(pn_2)\Gamma, y} &\leq \delta.	
\end{align}
Because $f$ is multiplicative, we have (assuming that $X > N$, as we may):
\begin{align}\label{eq:proof:ratio-1}
	f(n_1)f(pn_2) = f(n_2) f(pn_1).
\end{align}
Using \eqref{eq:proof:exp-gp}, \eqref{eq:proof:def-n-1} and \eqref{eq:proof:def-n-2} to approximate the factors appearing in \eqref{eq:proof:ratio-1}, and assuming that $F_{d,r}$ is continuous at $x_1,x_2$ and $y$ and that $N$ is sufficiently large as a function of $\delta$, we obtain
\begin{align*}
	&\brabig{F_{d,r}(x_1) + O(\delta)} n_1^d \cdot \brabig{F_{d,r}(y) + O(\delta)} (pn_2)^d =\\
	&\brabig{F_{d,r}(x_2) + O(\delta)} n_2^d \cdot \brabig{F_{d,r}(y) + O(\delta)} (pn_1)^d,
\end{align*}
which after elementary manipulations implies that
\begin{align}\label{eq:proof:F_d,r=const}
\abs{F_{d,r}(x_1) - F_{d,r}(x_2)} \cdot \abs{F_{d,r}(y)} = O(\delta).
\end{align}
Holding $x_1,x_2,y$ constant and passing to the limit $\delta \to 0$, we conclude that $F_{d,r}(x_1) = F_{d,r}(x_2)$ provided that $F_{d,r}$ is continuous at $x_1, x_2$ and $y$ (which holds for almost all $x_1,x_2$ and $y$) and $F_{d,r}(y) \neq 0$ (which we can ensure thanks to the assumption that $F_{d,r}$ is not a.e.\ zero). Thus, there is some $\lambda_r \neq 0$ such that $F_{d,r} = \lambda_r$ a.e..

Suppose next that we want to prove the claim for some $0 \leq i < d$ and that we have already dealt with all $i < j \leq d$. We proceed just like above, except that we use $d-i$ leading terms of \eqref{eq:proof:exp-gp} to approximate the factors in \eqref{eq:proof:ratio-1}. Assuming that $F_{j,r}$ are continuous at $x_1,x_2$ and $y$ for $i \leq j \leq d$, we obtain:
\begin{align*}
	&\bra{\lambda_r n_1^d + \brabig{F_{i,r}(x_1)+ O(\delta)} n_1^i} \cdot \brabig{\lambda_r+O(1/X)} (pn_2)^d =\\
	&\bra{\lambda_r n_2^d + \brabig{F_{i,r}(x_2)+ O(\delta)} n_2^i} \cdot \brabig{\lambda_r+O(1/X)} (pn_1)^d,
\end{align*}
which after elementary manipulations implies that
\begin{align}\label{eq:proof:F_d,r=const-2}
\abs{F_{i,r}(x_1)t_2^{d-i} - F_{i,r}(x_2)t_1^{d-i}} = O(\delta).
\end{align}
If $F_{i,r}$ was not a.e.\ zero then we could find a point $x$ satisfying the continuity assumptions mentioned above and such that $F_{i,r}(x) \neq 0$. Then, putting $x_1 = x_2 = y$ and $t_1 \neq t_2$ and passing to the limit $\delta \to 0$ we would obtain a contradiction. Thus $F_{i,r} = 0$ a.e., as needed.
 
We have shown that for asymptotically almost all $n$, letting $r = n \bmod Q$, we have
\begin{align}\label{eq:proof:f=l_r*n^d_r}
	f(n) &= \lambda_r n^{d_r}.
\end{align}
Pick $0 \leq r,s < Q$ such that $\gcd(r,s,Q) = 1$. Next, pick $n \equiv r \bmod Q$ such that \eqref{eq:proof:f=l_r*n^d_r} holds for $n$, and then pick $m \equiv s$ such that $\gcd(m,n) = 1$ and \eqref{eq:proof:f=l_r*n^d_r} holds for $nm$ and $m$ (asymptotically almost all $n$, and asymptotically almost $m$ in the relevant residue classes are suitable). We then have
\begin{align}\label{eq:proof:multi}
	\lambda_r\lambda_s n^{d_r} m^{d_s} = f(n)f(m) = f(nm) = \lambda_{rs} (nm)^{d_{rs}}. 
\end{align}
Comparing the two sides of \eqref{eq:proof:multi} and setting $s = 1$ (meaning that $r$ can be arbitrary) we see that $d_r = d_1$ for all $r$ such that $\lambda_r \neq 0$. Since the values of $d_r$ for $r$ with $\lambda_r = 0$ are irrelevant, we may freely assume that $d_r$ takes the same value for all $0 \leq r < Q$, say $d_r = d$. Now, \eqref{eq:proof:multi} simplifies to
\begin{align}\label{eq:proof:multi-2}
	\lambda_r\lambda_s = \lambda_{rs}, 
\end{align}
meaning that the sequence $\chi \colon \NN \to \CC$ defined by $\chi(n) = \lambda_r$ for $n \equiv r \bmod Q$ is multiplicative.

Since $\chi$ is not identically zero, we have $\chi(1) = 1$. Pick any $n \in \NN$. Reasoning like above, for asymptotically almost all $m \equiv 1 \bmod nQ$ we have
\begin{align}\label{eq:proof:multi-3}
	f(n) = \frac{f(nm)}{f(m)} = \frac{\chi(nm)(nm)^d}{\chi(m)n^d} = \chi(n)n^d.
\end{align}
Thus, we have $f(n) = \chi(n)n^d$ for all $n \in \NN$, as needed. \color{normal}

\bibliographystyle{alphaabbr}
\bibliography{bibliography}
\end{document}